\documentclass[letterpaper, 10 pt, conference]{ieeeconf}  

\IEEEoverridecommandlockouts                              

\overrideIEEEmargins                                      




\usepackage{graphicx}
\graphicspath{ {Pics/} }

\usepackage{amsmath}
\usepackage{float}
\usepackage{amssymb}
\usepackage{amsthm}
\usepackage{url}
\usepackage{xcolor}
\usepackage[
backend=biber,
style=numeric,
sorting=none
]{biblatex}

\usepackage[colorlinks, allcolors=blue]{hyperref}

\addbibresource{bibitem1.bib}
\usepackage{algorithm}
\usepackage{algpseudocode}
\newtheorem{lemma}{Lemma}
\newtheorem{definition}{Definition}
\newtheorem{theorem}{Theorem}
\newtheorem{rem}{Remark}

\newcommand{\df}{\overset{\text{def}}{=}}
\newcommand{\defe}{\overset{\mathrm{def}}{=}}

\allowdisplaybreaks

\title{\LARGE \bf
	Minimisation of Polyak-\L{}ojasewicz Functions Using Random Zeroth-Order Oracles
}

\author{Amir Ali Farzin and Iman Shames
	\thanks{The authors are with CIICADA Lab, School of Engineering, ANU
		{\tt\small \{amirali.farzin,iman.shames\}@anu.edu.au}}%
}%

\begin{document}
	\maketitle
	
	\begin{abstract} The application of a zeroth-order scheme for minimising Polyak-\L{}ojasewicz (PL) functions is considered. The framework is based on exploiting a random oracle to estimate the function gradient. The convergence of the algorithm to a global minimum in the unconstrained case and to a neighbourhood of the global minimum in the constrained case along with their corresponding complexity bounds are presented. The theoretical results are demonstrated via numerical examples.
	\end{abstract}

	\section{Introduction}
	Zeroth-order (ZO) or derivative-free optimisation schemes are of interest when the gradient (or subgradient in case of non-differentiable cost functions) information is not readily available. A common scenario is the case where the value of the cost function, and not its higher order derivatives, is the only information available to the solver \cite{maass2021zeroth} and \cite{marco2016automatic}. Sometimes, even if the gradient value is theoretically available, evaluating the gradient might incur high computational costs \cite{bottou2018optimization}. ZO methods provide a way forward for solving such optimisation problems as well. The majority of existing ZO methods aim to construct an estimate of the gradient of the function and use this estimate as a surrogate for the gradient. The method analysed in this paper is no exception to this general trend.
	
	Various ZO optimisation methods have been designed and analysed for different problem classes. In \cite{ghadimi2016mini}, a constrained stochastic composite optimisation problem was studied where the function is possibly non-convex. The proposed algorithm in \cite{ghadimi2016mini} relied on the existence of an unbiased variance-bounded estimator of the gradient. In \cite{nesterov_random_2017}, the authors considered the unconstrained zeroth-order optimisation problem and defined a random oracle which was an unbiased variance-bounded estimator of the gradient of a smoothed version of the original function.  
	
	The Polyak-\L{}ojasiewicz (PL) inequality was first introduced in \cite{polyak1964gradient} and the convergence of gradient descent method under PL assumption was first proved there. 
	It is observed that a range of different cost functions satisfy the PL condition \cite{liu2022loss}. 
	Karimi et al. used the PL inequality to provide a new proof technique for analysing various first-order gradient descent methods and used proximal PL condition to analyse non-smooth cases \cite{karimi2016linear}.  In \cite{anagnostidis2021direct}, a variant of the direct search method was employed to solve stochastic minimisation and saddle point problems. The direct search algorithm which is a derivative-free scheme and obtained the complexity bounds for the convergence of PL functions.
	
	In this paper, we aim to fill a gap in the existing literature on random zeroth order oracles. We specifically leverage the class of random method proposed in \cite{nesterov_random_2017} for optimisation problems, and establish its performance for the case where the cost functions satisfy the (proximal) PL condition. Specifically, we establish the convergence properties and the complexity bounds of these methods for both constrained and unconstrained cases. By doing so, we hope to shed some light on the behaviour of such algorithms when applied to a class of benignly nonconvex problems.
	
	The outline of this paper is as follows. In Section \ref{sec:prelim} we introduce the necessary background information. The problems of interest are outlined in \ref{sec:prob}. The convergence properties and the complexity of the algorithms for solving the problems of interest are presented in Section~\ref{sec:main}. Numerical examples are presented in \ref{sec:nuemrical} and conclusions and possible future research directions come in the end.

	\section{Preliminaries}\label{sec:prelim}
	In this section we provide the necessary definitions and background material required for presenting the results of this paper.
	
	Consider a function $f: \mathbb{R}^n\rightarrow \mathbb{R}$. The gaussian smoothed version of $f$, termed $f_{\mu}(x)$, is defined below:
	\begin{align}\label{eq:eq21}
		\begin{split}
			&f_\mu(x) \defe \frac{1}{\kappa}\underset{E}{\int}f(x+\mu u)e^{-\frac{1}{2}||u||^2}\mathrm{d}u,\\&\kappa \defe\underset{E}{\int}e^{-\frac{1}{2}||u||^2}\mathrm{d}u=\frac{(2\pi)^{n/2}}{[det B]^{\frac{1}{2}}},
		\end{split}
	\end{align}
	where vector $u \in R^n$ is sampled from zero mean Gaussian distribution with positive definite correlation operator $B^{-1}$ and the positive scalar smoothing parameter $\mu$. 
	Define the random oracle $g_\mu$ as
	\begin{equation}\label{eq:eq23}
		g_\mu(x) \defe \frac{f(x+\mu u)-f(x)}{\mu} Bu,
	\end{equation}
	where $u$ and $B$ are defined above. 
	The projection operator on a convex set $\mathcal{Z}$ is defined as $$\mathrm{Proj}_{\mathcal{Z}}(x) \df \arg\underset{z\in\mathcal{Z}}{\min} \|z-x\|^2,$$ 
	where $\|\cdot\|$ is the Euclidean norm of its argument if it is a vector, and the corresponding induced norm if the argument is a matrix.
	\begin{definition}[$C^{1,1}$ Functions]\label{def:lip_grad}
		The differentiable function $f(x):D\rightarrow \mathbb{R}$ with $D\subseteq \mathbb{R}^n$ as its domain is in $C^{1,1}$ if its gradient is Lipschitz, i.e.,
		\begin{equation}
			||\nabla f(x)-\nabla f(y)||\leq L_1(f)||x-y||,\forall x,y\in D,
		\end{equation}
		where $L_1(f)>0$ is the \emph{gradient Lipschitz constant}.
	\end{definition}
	\begin{rem}
		Any  $f(x) \in C^{1,1}$ satisfies
		\begin{equation} \label{eq:eq2}
			f(y)\leq f(x) + \langle \nabla f(x), y-x\rangle+\frac{L_1(f)}{2}||y-x||^2.
		\end{equation} 
	\end{rem}
	\begin{definition}[PL Functions \cite{polyak1964gradient}]
		The differentiable function $f(x):D\rightarrow \mathbb{R}$ with $D\subseteq \mathbb{R}^n$ as its domain is termed a \emph{PL function} if it satisfies the Polyak-\L{}ojasewicz (PL) condition, i.e.,
		\begin{equation}\label{eq:eq3}
			\dfrac{1}{2}||\nabla f(x)||^2 \geq l (f(x) - f^\ast),\; \forall x\in D,
		\end{equation}  
		where $l>0$ is the \emph{PL constant} and $f^\ast=f(x^\ast).$ 
	\end{definition} 
	We have the following result for PL functions.
	\begin{lemma}\label{lm:lm1}
		PL inequality implies that every stationary point of the function is a global minimum.
	\end{lemma}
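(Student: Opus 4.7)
The plan is to prove this directly from the PL inequality~\eqref{eq:eq3} by evaluating it at an arbitrary stationary point. Let $\bar{x} \in D$ be a stationary point, i.e., $\nabla f(\bar{x}) = 0$. Substituting $x = \bar{x}$ into the PL condition yields
\begin{equation*}
    0 = \tfrac{1}{2}\|\nabla f(\bar{x})\|^2 \geq l\bigl(f(\bar{x}) - f^\ast\bigr).
\end{equation*}
Since $l > 0$, dividing through gives $f(\bar{x}) - f^\ast \leq 0$, i.e.\ $f(\bar{x}) \leq f^\ast$.

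Next I would combine this with the defining property of $f^\ast = f(x^\ast)$ as the global minimum over $D$, which gives the reverse inequality $f(\bar{x}) \geq f^\ast$. Together these force $f(\bar{x}) = f^\ast$, so $\bar{x}$ is itself a global minimiser.

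There is essentially no obstacle here: the argument is a one-line substitution plus the definition of the global minimum. The only subtlety worth flagging is that PL does not claim uniqueness of the minimiser, only that every stationary point attains the optimal value $f^\ast$; the set of global minimisers may still be nontrivial. No smoothness or convexity beyond what is already in the hypothesis is required.
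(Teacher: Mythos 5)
Your proof is correct and follows the same route as the paper: substitute the stationary point into the PL inequality to get $f(\bar{x}) \leq f^\ast$, then invoke the definition of $f^\ast$ as the global minimum for the reverse inequality. The paper's own proof is identical in substance, merely leaving the step $f(\bar{x}) \geq f^\ast$ implicit.
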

	\begin{proof} 
		Assume that $x$ is an arbitrary stationary point, i.e., $||\nabla f(x)||=0$. Substituting $x$ in \eqref{eq:eq3} yields $$f(x)-f^\ast\leq0\implies f(x) =f^\ast.$$ Hence, any stationary point corresponds to the global minimum.
	\end{proof}
	\begin{definition}[Proximal PL Functions]\label{as}
		Consider the function $F(x)=f(x)+h(x)$ where $f(x):D\rightarrow \mathbb{R}$ with $D\subseteq \mathbb{R}^n$ is a differentiable function over as its domain and $h(x)$ is possibly nondiffrentiable. The function $F(x)$ is a proximal PL (PPL) function if it satisfies the PPL condition on a set $\mathcal{X}$ with positive constant $l$, for all $x \in \mathcal{X}$, i.e.,
		\begin{align}\label{eq:PPL}
			\frac{1}{2}Q(x,L_{1}(f)) \geq l (F(x)-F(x^\ast)),
		\end{align}
		where
		\begin{align}\label{eq:PPL_quad}
			\begin{split}
				Q(x,a) \defe &-2a \underset{z\in\mathcal{X}}{\min} \{\frac{a}{2}||z-x||^2+\\&\langle\nabla f(x),z-x\rangle+h(z)-h(x)\}
			\end{split}
		\end{align}
		with $a$ being a positive scalar.
	\end{definition}
	The Proximal PL condition is a generalisation of the PL condition \eqref{eq:eq3}. To see more examples, refer to \cite{karimi2016linear}.
	
	To state complexity results we use the big-O notation in the sense defined below.
	\begin{definition}[The big O-notation]
		Suppose $f(x)$ and $g(x)$ are two positive scalar functions defined on some subset
		of the real numbers. We write 	$f(x) = \mathcal{O}(g(x))$, and say $f(x)$ is in the order of $g(x)$, if and only if there exist constants $\bar{K}$ and $M$ such that 	$f(x) \leq M g(x)$ for all $x>K$.
	\end{definition}
	\section{Problems of Interest}\label{sec:prob}
	In this paper we study the performance of a random zeroth-order method for optimising PL functions for both unconstrained and constrained cases. Specifically, first, we study the following unconstrained problem
	\begin{equation}\label{eq:eq1}
		\min_{x\in R^n} \quad f(x)
	\end{equation}
	where $f:\mathbb{R}^n\rightarrow \mathbb{R}$ is in $C^{1,1}$, satisfies the PL condition \eqref{eq:eq3}, and is bounded below.

	
	Later, we will focus on the following constrained problem:
	\begin{equation}\label{eq:eq87} 
		\min_{x \in \mathcal{X}} \quad f(x),
	\end{equation}
	where $\mathcal{X}$ is a compact convex set in $\mathbb{R}^n$ with diameter $d_x$ and $f(x)$ is in $C^{1,1}$ and $F(x)=f(x)+\mathcal{I}_{\mathcal{X}}(x)$ satisfies \eqref{eq:PPL} where $\mathcal{I}_{\mathcal{X}}(x)$ is the indicator function of set $\mathcal{X}$, i.e.,
	\[\mathcal{I}_{\mathcal{X}}(x)=\begin{cases}
		0 & x \in \mathcal{X}\\
		\infty & x \not\in \mathcal{X}.
	\end{cases}\]
	The boundedness of $\mathcal{X}$ and continuity of $f(x)$ guarantees that the problem has a solution.

	\section{Main Results}\label{sec:main}
	In this section, we establish the convergence properties and the complexity bounds of a well-known class of random zeroth-order algorithms proposed by Nesterov and Spokoiny in \cite{nesterov_random_2017} for minimising (proximal) PL functions. We consider both unconstrained and constrained cases.  
	
	\subsection{Unconstrained Problem}
	In this subsection, we study the unconstrained problem \eqref{eq:eq1}. The framework introduced in \cite{nesterov_random_2017} is recalled in Algorithm~\ref{alg:RM}, where $x_0$ is the initial guess, $h_k$ is the step size and $N$ is the number of iterations. 
	\begin{algorithm}[ht]
		\caption{$\mathrm{RS}_\mu$}\label{alg:RM}
		\begin{algorithmic}[1]
			\State Input: $x_0,\mu,h_k,N$
			\For {$k = 0,\dots, N$}
			\State Generate $u$
			\State Calculate $g_\mu(x_k)$ using \eqref{eq:eq23}
			\State $x_{k+1} = x_k - h_kg_\mu(x_k)$ 
			\EndFor
			\State return $x_{N}$
		\end{algorithmic}
	\end{algorithm}
	
	The following theorem characterises the convergence properties of Algorithm~\ref{alg:RM} applied to problem \eqref{eq:eq1}.
	\begin{theorem}\label{thm:unconstrained}
		Let the sequence $\{x_k\}_{k\geq0}$ be generated by Algorithm~\ref{alg:RM} (RS$_\mu$), where $f(x)\in C^{1,1}$ and satisfies PL condition. Then,  for any $N\geq0$, with $h_k = \frac{1}{4(n+4)L_{1}(f)}$, we have
		\begin{align}
			\begin{split}
				\frac{1}{N+1}&\sum_{k=0}^{N}(\Phi_k-f^\ast)\leq \frac{8(n+4)L_1(f)}{l} \left [\frac{f(x_0)-f^\ast}{N+1} \right.\\&+ \left . \frac{3\mu^2(n+4)}{32}L_1(f) \right ] + \frac{\mu^2}{4l}L_1^2(f)(n+6)^3,
			\end{split}	
		\end{align}
		where $\Phi_k \overset{def}{=} E_{\mathcal{U}_{k}}[f(x_k)]$ for all $k\geq 1$ and $\Phi_0 = f(x_0)$. Also, $\mathcal{U}_{k}=\{u_{0},u_{1},\dots,u_{k-1}\}$ and $n$ is the dimension of $x$.
	\end{theorem}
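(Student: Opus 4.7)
The plan is to adapt the Nesterov-Spokoiny analysis of the random Gaussian-smoothed gradient method to exploit the PL structure. The key ingredients I would borrow from the framework underlying Algorithm~\ref{alg:RM} are: (i) $f_\mu$ is also in $C^{1,1}$ with $L_1(f_\mu)\le L_1(f)$; (ii) $E_{u}[g_\mu(x)] = \nabla f_\mu(x)$, so $g_\mu$ is an unbiased estimator of $\nabla f_\mu$; (iii) the second-moment bound $E_u[\|g_\mu(x)\|^2] \leq \tfrac{\mu^2}{2}L_1^2(f)(n+6)^3 + 2(n+4)\|\nabla f(x)\|^2$; and (iv) the smoothing bias $|f_\mu(x)-f(x)|\le \tfrac{\mu^2}{2}L_1(f)n$ and $\|\nabla f_\mu(x)-\nabla f(x)\|$ bounds. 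These are standard consequences of \eqref{eq:eq21}-\eqref{eq:eq23}.

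The first step is to apply the descent lemma \eqref{eq:eq2} to $f_\mu$ along the update $x_{k+1} = x_k - h_k g_\mu(x_k)$ and take expectation over $u_k$ conditional on $x_k$. Combining unbiasedness with the second-moment bound and substituting $h_k = \frac{1}{4(n+4)L_1(f)}$ yields, after straightforward algebra, a one-step inequality of the form
\begin{align*}
E_{u_k}[f_\mu(x_{k+1})] \;\le\; f_\mu(x_k) - \tfrac{h_k}{2}\|\nabla f(x_k)\|^2 + C_1 \mu^2,
\end{align*}
where $C_1$ collects terms proportional to $L_1^2(f)(n+6)^3$ and $L_1(f)$. The chosen $h_k$ is precisely the value that ensures the $h_k^2$ term from the quadratic part of the descent lemma is dominated by the linear (descent) term, so that $\|\nabla f(x_k)\|^2$ appears with a non-trivial negative coefficient.

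The second step is to invoke the PL inequality \eqref{eq:eq3} in the form $\|\nabla f(x_k)\|^2 \ge 2l(f(x_k)-f^\ast)$ to convert the gradient-squared descent into a descent in function value, i.e.,
\begin{align*}
l\bigl(f(x_k)-f^\ast\bigr) \;\le\; \tfrac{1}{h_k}\bigl(f_\mu(x_k) - E_{u_k}[f_\mu(x_{k+1})]\bigr) + \tfrac{C_1\mu^2}{h_k}.
\end{align*}
Taking total expectation over $\mathcal{U}_{k+1}$, summing for $k=0,\dots,N$, and telescoping on the right-hand side collapses the $f_\mu$ terms to $f_\mu(x_0)-E[f_\mu(x_{N+1})] \le f_\mu(x_0)-f^\ast$. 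Finally, I would use the smoothing bias $|f_\mu(x)-f(x)|\le \tfrac{\mu^2}{2}L_1(f)n$ to pass from $f_\mu(x_0)$ back to $f(x_0)$ on one side and from $f(x_k)$ back to $\Phi_k = E[f(x_k)]$ on the other; dividing through by $l(N+1)$ and substituting $1/h_k = 4(n+4)L_1(f)$ gives the stated bound.

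The main obstacle I anticipate is bookkeeping rather than ideas: the coefficients $\tfrac{8(n+4)L_1(f)}{l}$, $\tfrac{3\mu^2(n+4)L_1(f)}{32}$, and $\tfrac{\mu^2 L_1^2(f)(n+6)^3}{4l}$ must emerge exactly, which requires tracking the two distinct $\mu^2$ contributions (one from the second-moment bound on $g_\mu$, and one from the $f_\mu$-to-$f$ smoothing gap) separately and absorbing them into the correct places. A second subtlety is that the PL inequality holds for $f$, not for $f_\mu$, so the telescoping must be set up on $f_\mu$ while the PL step is applied to $\|\nabla f\|^2$; the bridge between the two is exactly what forces the appearance of the $(n+6)^3$ term rather than a tighter $n$-dependence.
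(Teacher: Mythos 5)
Your strategy is sound and is, at its core, the same as the paper's: descent lemma applied to $f_\mu$, unbiasedness of $g_\mu$, a second-moment bound on $g_\mu$, telescoping, and the PL inequality to convert gradient decrease into function-value decrease. The one genuine divergence is \emph{which} moment bound you use and \emph{where} you pass from $\nabla f_\mu$ to $\nabla f$. You invoke the bound $E_u[\|g_\mu(x)\|^2]\le \tfrac{\mu^2}{2}L_1^2(f)(n+6)^3+2(n+4)\|\nabla f(x)\|^2$ and fold the gradient-comparison and PL steps into the per-iteration recursion; the paper instead uses the $f_\mu$-centred bound $E_u[\|g_\mu(x)\|^2]\le 4(n+4)\|\nabla f_\mu(x)\|^2+3\mu^2L_1^2(f)(n+4)^3$ (Nesterov--Spokoiny, Lemma~5), keeps the entire telescoping argument in terms of $\Xi_k^2=E_{\mathcal{U}_k}[\|\nabla f_\mu(x_k)\|^2]$, and only at the very end applies $\|\nabla f(x)\|^2\le 2\|\nabla f_\mu(x)\|^2+\tfrac{\mu^2}{2}L_1^2(f)(n+6)^3$ together with PL. The paper's ordering is what produces the exact constants $\tfrac{8(n+4)L_1(f)}{l}$, $\tfrac{3\mu^2(n+4)}{32}L_1(f)$, and $\tfrac{\mu^2}{4l}L_1^2(f)(n+6)^3$; your route yields a bound of the same order in $n$, $\mu$, $L_1(f)$, $l$, and $N$, but the $\mu^2$ error terms come out with different coefficients (and your claimed per-step coefficient $-\tfrac{h_k}{2}\|\nabla f(x_k)\|^2$ degrades to $-\tfrac{h_k}{4}\|\nabla f(x_k)\|^2$ once the quadratic term and the $\nabla f_\mu$-to-$\nabla f$ conversion are both accounted for --- which, conveniently, still reproduces the leading factor $\tfrac{8(n+4)L_1(f)}{l}$ after dividing by $\tfrac{h_kl}{2}$). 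So the proof would need to either switch to the paper's moment bound or restate the theorem with your constants. One point in your favour: you explicitly flag the need to control the gap between $f_\mu(x_0)$ and $f(x_0)$ when closing the telescope, a step the paper's own proof passes over silently in going from \eqref{eq:eq12} to \eqref{eq:eq13}.
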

	\begin{proof} From \cite[Equation (12)]{nesterov_random_2017}, we know that if $f \in C^{1,1}$, then $f_\mu \in C^{1,1}$ and $L_1(f_\mu)\leq L_1(f)$. Thus writing \eqref{eq:eq2} for $f_\mu(x)$ at points $x_k$ and $x_{k+1}$
		\begin{align}\label{eq:eq8}
			\begin{split}
				f_\mu(x_{k+1}) \leq& f_\mu(x_k) - h_k\langle\nabla f_\mu(x_k),g_\mu(x_k)\rangle\\&+\frac{1}{2}h_k^2L_1(f_\mu)||g_\mu(x_k)||^2.
			\end{split}
		\end{align}
		From {\cite[Equation (21)]{nesterov_random_2017}}, we know for a function $f(x)$ we have $$	\nabla f_{\mu}(x) = 	\frac{1}{\kappa}\int \frac{f(x+\mu u)-f(x)}{\mu}e^{-\frac{1}{2}||u||^2}Bu\mathrm{d}u.$$
		From the definition of $g_\mu(x)$ and the term obtained for $\nabla f_{\mu}(x)$, we have $E_{u}[g_{\mu}(x)] = \nabla f_{\mu}(x)$ (it means $g_{\mu}(x)$ is an unbiased estimator of $\nabla f_{\mu}(x)$).
		Taking the expectation in $u_k$ yields
		\begin{align} \label{eq:eq9}
			\begin{split}
				E_{u_k}(f_\mu(x_{k+1})) \leq& f_\mu(x_k) - h_k||\nabla f_\mu(x_k)||^2\\&+\frac{1}{2}h_k^2L_1(f_\mu)E_{u_k}(||g_\mu(x_k)||^2).
			\end{split}
		\end{align}
		For a function $f\in C^{1,1}$ from {\cite[Lemma 5]{nesterov_random_2017}}, we have
		\begin{align}\label{eq:eq6}
			\begin{split}
				E_{u}[||g_\mu(x)||^2] \leq& 4(n+4)||\nabla f_\mu(x)||^2\\&+3\mu^2L_1^2(f)(n+4)^3
			\end{split}
		\end{align}
		Substituting \eqref{eq:eq6} in \eqref{eq:eq9} and noting $L_1(f_\mu)\leq L_1(f)$, we obtain
		\begin{align}
			\begin{split}
				E_{u_k}(f_\mu(x_{k+1}))& \leq f_\mu(x_k) - h_k||\nabla f_\mu(x_k)||^2+\\&\frac{1}{2}h_k^2L_1(f)\left(4(n+4)||\nabla f_\mu(x_k)||^2\right .\\&\left.+3\mu^2L_1^2(f)(n+4)^3\right).
			\end{split}
		\end{align}
		Fixing $h_k = \hat{h} \overset{def}{=} \frac{1}{4(n+4)L_1(f)}$:
		\begin{align}
			\begin{split}
				E_{u_k}(f_\mu(x_{k+1}))&\leq f_\mu(x_k) - \frac{1}{2}\hat{h}||\nabla f_\mu(x_k)||^2 \\&+ \frac{3\mu^2}{32}L_1(f)(n+4).
			\end{split}
		\end{align}
		Taking the expectation of this inequality with respect to $\mathcal{U}_{k-1} = \{u_0,u_1,\dots,u_{k-1}\}$, we obtain
		\begin{equation}\label{eq:eq12}
			\Phi_{k+1} \leq \Phi_k -\frac{1}{2}\hat{h}\Xi_k^2 + \frac{3\mu^2(n+4)}{32}L_1(f),
		\end{equation}
		where $\Xi_k^2 = E_{\mathcal{U}_k}(||\nabla f_\mu(x_k)||^2)$. Considering $f^\ast\leq f(x_{N+1})$ and summing \eqref{eq:eq12} from $k = 0$ to $k=N$ and divide it by $N+1$, we get
		\begin{align}\label{eq:eq13}
			\begin{split}
				\frac{1}{N+1} \sum_{k=0}^{N}\Xi_k^2 &\leq 8(n+4)L_1(f)[\frac{f(x_0)-f^\ast}{N+1}\\&+\frac{3\mu^2(n+4)}{32}L_1(f)],
			\end{split}
		\end{align}
		From \cite[Lemma 4]{nesterov_random_2017}, it is known that for a function $f\in C^{1,1}$
		\begin{equation}\label{eq:eq7}
			||\nabla f(x)||^2 \leq 2||\nabla f_\mu(x)||^2 + 	\frac{\mu^2}{2}L_1^2(f)(n+6)^3.
		\end{equation}
		From \eqref{eq:eq3} and \eqref{eq:eq7} one concludes
		\begin{align}\label{eq:eq14}
			\begin{split}
				2l(f(x_k)-&f^\ast)\leq ||\nabla f(x_k)||^2 \\&\leq 2||\nabla f_\mu(x_k)||^2 + \frac{\mu^2}{2}L_1^2(f)(n+6)^3.
			\end{split}
		\end{align}
		Taking the expectation of this inequality with respect to $\mathcal{U}_{k}$
		\begin{equation}\label{eq:eq15}
			E_{\mathcal{U}_k}(2l(f(x_k)-f^\ast))\leq \theta_k^2 \leq 2\Xi_k^2 + \frac{\mu^2}{2}L_1^2(f)(n+6)^3,
		\end{equation}
		where $\theta_k^2 = E_{\mathcal{U}_k}(||\nabla f(x_k)||^2)$. Summing the inequality from $k = 0$ to $k=N$ and dividing it by $N+1$, yields
		\begin{align}\label{eq:eq16}
			\begin{split}
				\frac{1}{N+1}\sum_{k=0}^{N}&(\Phi_k-f^\ast)\leq \frac{1}{2l(N+1)}\sum_{k=0}^{N}\theta_k^2 \\&\leq \frac{1}{l(N+1)}\sum_{k=0}^{N}\Xi_k^2 + \frac{\mu^2}{4l}L_1^2(f)(n+6)^3.
			\end{split}
		\end{align}
	\end{proof}
	In a practical implementation, we might be interested in identifying the ``best'' solution guess at any step $N$. To this aim, define $\hat{x}_N \overset{\mathrm{def}}{=} \arg\underset{x}{\min}[f(x):x\in\{x_0,\dots,x_N\}]$. Hence,
	\begin{equation}\label{eq:eq17}
		E_{\mathcal{U}_{N-1}}(f(\hat{x}_N)-f^\ast )\leq 	\frac{1}{N+1}\sum_{k=0}^{N}(\Phi_k-f^\ast).
	\end{equation}
	\begin{rem}[$\mathrm{RS}_{\mu}$ Complexity, Parameter Selection, and Solution Error Bound]\label{remCU}
		If $\mu$ is in the order of $\mathcal{O}(\frac{\sqrt{l\epsilon}}{n^{3/2} L_1(f)})$ 
		and $N$ is in the order of $\mathcal{O}(\frac{nL_1(f)}{l\epsilon})$, it is guaranteed that
		$E_{\mathcal{U}_{N-1}}(f(\hat{x}_N))-f^\ast \leq \epsilon$ for some positive scalar $\epsilon$. 
	\end{rem}

	In comparison with the non-convex smooth case in \cite{nesterov_random_2017}, from the properties of PL functions the convergence is to a global minimum (see Lemma \ref{lm:lm1}). Also, comparing the bounds in these two cases, for the case where $l>1$ one can obtain the same error bound, i.e. $\epsilon$, with fewer iterations. The number of iterations is inversely proportional with $l$.  
	\subsection{Constrained Problem}
	Now we will focus on the problem \eqref{eq:eq87}. This problem can be reformulated as  
	$$\underset{x \in R^n}{\min} \;f(x)+\mathcal{I}_{\mathcal{X}}(x),$$ where $\mathcal{I}_{\mathcal{X}}(x)$ is the indicator function of $\mathcal{X}$ . The new scheme for constrained problem is defined in Algorithm~\ref{alg:RMc}.
	\begin{algorithm}[ht]
		\caption{$\mathrm{RSc}_\mu$}\label{alg:RMc}
		\begin{algorithmic}[1]
			\State Input: $x_0,h_k,\mu,N$
			\For {$k = 0,\dots, N$}
			\State Generate $u$
			\State Calculate $g_\mu(x_k)$ using \eqref{eq:eq23}
			\State $\bar{x}_{k+1} = x_k - h_kg_\mu(x_k)$
			\State $x_{k+1} = \mathrm{Proj}_{\mathcal{X}}(\bar{x}_{k+1})$
			\EndFor
			\State return $x_{N}$
		\end{algorithmic}
	\end{algorithm}
	In Algorithm \ref{alg:RMc}, the projection is used to ensure that the output sequence is completely in the feasible set.
	
	Before proceeding further, we define the following auxiliary variables:
	\begin{equation}\label{eq:eq91}
		P_\mathcal{X}(x,g(x),h) \df \frac{1}{h}[x-\mathrm{Proj}_{\mathcal{X}}(x-hg(x))] ,
	\end{equation}
	\begin{equation}\label{eq:eq92}
		s_k \df P_\mathcal{X}(x_k,g_{\mu}(x_k),h_k),
	\end{equation}
	\begin{equation}\label{eq:eq93}
		v_k \df P_\mathcal{X}(x_k,\nabla f_{\mu}(x_k),h_k).
	\end{equation}
	In the unconstrained case we had $x_{k+1}=x_k-h_kg_{\mu}(x_k)$. In the constrained case, from Algorithm \ref{alg:RMc}, \eqref{eq:eq91}, and \eqref{eq:eq92}, we can see that $x_{k+1}=x_k-h_ks_k.$
	
	
	%
	

	Before stating the main result, in what follows, we propose a lower bound for the value of an operator that plays a crucial role in proving the main result of this section.
	\begin{lemma}\label{lmPPL}
		Consider problem \eqref{eq:eq87} where   $f(x)\in C^{1,1}$ is a proximal PL function in the sense of Definition \ref{as}, we have 
		\begin{align}\label{eq:eqPPL}
			\begin{split}
				T(x_k,L_{1}&(f)) \geq 2l (f(x_k)-f(x^\ast))\\&-\mu L_{1}(f)^2(n+3)^{3/2}d_x- 2L_{1}(f)d_x||\xi_{k}||,
			\end{split}
		\end{align}
		where 
		\begin{align}
			T(x,a) \df -2a \underset{z\in\mathcal{X}}{\min}& \label{eq:T_def} \{\frac{a}{2}||z-x||^2\\&+\langle g_{\mu}(x),z-x\rangle+\mathcal{I}_{\mathcal{X}}(z)-\mathcal{I}_{\mathcal{X}}(x)\}, \notag
		\end{align}for a positive scalar $a$. 
	\end{lemma}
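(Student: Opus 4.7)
The plan is to reduce $T(x_k, L_1(f))$ to the proximal PL quantity $Q(x_k, L_1(f))$ of Definition~\ref{as} (specialised to $h = \mathcal{I}_{\mathcal{X}}$), plus a perturbation coming from the oracle error $g_\mu(x_k)-\nabla f(x_k)$. With $h = \mathcal{I}_{\mathcal{X}}$ and both $z,x_k \in \mathcal{X}$ the indicator terms vanish inside the min, so the minimisations defining $Q$ and $T$ are structurally identical apart from the linear term: $\langle \nabla f(x_k), z-x_k\rangle$ for $Q$, $\langle g_\mu(x_k), z-x_k\rangle$ for $T$.

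Next I would quantify the gap. Let $m_T$ and $m_Q$ denote the two minimum values and let $z_Q^\ast \in \mathcal{X}$ be the minimiser defining $Q$. Substituting the feasible candidate $z_Q^\ast$ into the $T$-minimisation yields
\[
m_T \leq m_Q + \langle g_\mu(x_k)-\nabla f(x_k),\, z_Q^\ast - x_k\rangle,
\]
so Cauchy-Schwarz together with $\|z_Q^\ast - x_k\|\leq d_x$ and the leading factor $-2L_1(f)$ in the definitions give
\[
T(x_k,L_1(f)) \geq Q(x_k,L_1(f)) - 2L_1(f)\, d_x\, \|g_\mu(x_k)-\nabla f(x_k)\|.
\]
I would then split $g_\mu(x_k)-\nabla f(x_k) = \xi_k + (\nabla f_\mu(x_k)-\nabla f(x_k))$ with $\xi_k \defe g_\mu(x_k) - \nabla f_\mu(x_k)$, and invoke the Nesterov-Spokoiny smoothing bias bound $\|\nabla f_\mu(x)-\nabla f(x)\|\leq \tfrac{\mu}{2} L_1(f)(n+3)^{3/2}$ from \cite{nesterov_random_2017}. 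A triangle inequality then converts the perturbation into $2L_1(f)d_x\|\xi_k\| + \mu L_1(f)^2 (n+3)^{3/2} d_x$, matching the two error terms on the right of \eqref{eq:eqPPL}.

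Finally, I would apply \eqref{eq:PPL} to $Q$; since $x_k,x^\ast\in\mathcal{X}$ the indicator vanishes and $F(x_k)-F(x^\ast)=f(x_k)-f(x^\ast)$, giving $Q(x_k,L_1(f))\geq 2l(f(x_k)-f(x^\ast))$. Combining this with the comparison displayed above produces exactly \eqref{eq:eqPPL}. The one delicate point is the sign bookkeeping: because of the leading $-2a$ in the definitions of $T$ and $Q$, an \emph{upper} bound on $m_T - m_Q$ is what yields the desired \emph{lower} bound on $T-Q$, which is why the minimiser of $Q$ must be substituted as the candidate into the $T$-min and not vice versa. Everything else is routine triangle inequality and Cauchy-Schwarz paired with an off-the-shelf smoothing estimate, so this is the only step that genuinely requires attention.
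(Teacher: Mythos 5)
Your proof is correct and follows essentially the same route as the paper's: both reduce $T(x_k,L_1(f))$ to $Q(x_k,L_1(f))$ by perturbing the linear term of the minimisation, split the error $g_\mu(x_k)-\nabla f(x_k)$ into the smoothing bias (controlled by the Nesterov--Spokoiny bound $\|\nabla f(x)-\nabla f_\mu(x)\|\leq \tfrac{\mu}{2}L_1(f)(n+3)^{3/2}$) plus the oracle noise $\xi_k$, and finish with Cauchy--Schwarz, the diameter bound, and the PPL inequality. The only cosmetic difference is that you compare the two minima by substituting the $Q$-minimiser into the $T$-objective, whereas the paper bounds the perturbation uniformly over $z$ inside the $\min$; both yield identical constants, and your sign-bookkeeping remark is exactly the right point to watch.
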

	\begin{proof}
		From the definition of PPL functions, we have
		\begin{align*}
			\begin{split}
				&2l(f(x_k)-f(x^\ast))\leq Q(x_k,L_{1}(f))\\&
				\leq -2L_{1}(f) \underset{z\in\mathcal{X}}{\min} (\frac{L_{1}(f)}{2}||z-x_k||^2\\&+\langle\nabla f(x_k),z-x_k\rangle)\\&\leq
				-2L_{1}(f) \underset{z\in\mathcal{X}}{\min} (\frac{L_{1}(f)}{2}||z-x_k||^2+\langle\nabla f_{\mu}(x_k)\\&,z-x_k\rangle+\langle\nabla f(x_k)-\nabla f_{\mu}(x_k),z-x_k\rangle)\\&\leq
				-2L_{1}(f) \underset{z\in\mathcal{X}}{\min} (\frac{L_{1}(f)}{2}||z-x_k||^2+\langle\nabla f_{\mu}(x_k)\\&,z-x_k\rangle-||\nabla f(x_k)-\nabla f_{\mu}(x_k)||||z-x_k||),
			\end{split}
		\end{align*}
		where the second inequality is a consequence of evaluating \eqref{eq:PPL_quad} for $x_k\in\mathcal{X}$ and noting that $\mathcal{I}_{\mathcal{X}}(x_k) =0$.
		From {\cite[Lemma 3]{nesterov_random_2017}}, for a function $f\in C^{1,1}$ we have
		\begin{equation}\label{eq:lm}
			||\nabla f(x) - \nabla f_\mu(x)|| \leq \frac{\mu}{2}L_1(f)(n+3)^{3/2}.
		\end{equation}
		From \eqref{eq:lm} and $||z-x_k||\leq d_x$ for all $z\in\mathcal{X}$, we have 
		\begin{align*}
			\begin{split}
				&2l(f(x_k)-f(x^\ast))\leq Q(x_k,L_{1}(f))\\&\leq
				-2L_{1}(f) \underset{z\in\mathcal{X}}{\min} (\frac{L_{1}(f)}{2}||z-x_k||^2\\&+\langle\nabla f_{\mu}(x_k),z-x_k\rangle-\frac{\mu L_{1}(f)(n+3)^{3/2}d_x}{2})\\&\leq -2L_{1}(f) \underset{z\in\mathcal{X}}{\min} (\frac{L_{1}(f)}{2}||z-x_k||^2+\langle g_{\mu}(x_k)\\&,z-x_k\rangle-||\xi_{k}||d_x-\frac{\mu L_{1}(f)(n+3)^{3/2}d_x}{2}).
			\end{split}
		\end{align*}
		where $\xi_{k} \overset{\mathrm{def}}{=} g_{\mu}(x_k) -\nabla f_{\mu}(x_k)$.	Rearranging above terms completes the proof.
	\end{proof}
	Before stating the main result regarding the performance of Algorithm~\ref{alg:RMc}, we present the following lemma on the variance of the random oracle $g_{\mu}(x)$.
	\begin{lemma}\label{lm20}
		Random oracle $g_{\mu}(x)$ is a variance bounded unbiased estimator of $\nabla f_{\mu}(x)$. That is $$E_{u}[||g_{\mu}(x_k)- \nabla f_{\mu}(x_k)||^2]\leq \sigma_k^2,\;\; \sigma_k\geq0.$$
	\end{lemma}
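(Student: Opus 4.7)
The plan is to establish Lemma \ref{lm20} via the standard variance decomposition, leveraging the already-established unbiasedness $E_u[g_\mu(x)] = \nabla f_\mu(x)$ (noted right after equation \eqref{eq:eq8}) together with the second-moment bound of \cite[Lemma 5]{nesterov_random_2017}, reproduced as \eqref{eq:eq6}.

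First I would write
\begin{align*}
E_u[\|g_\mu(x_k) - \nabla f_\mu(x_k)\|^2] = E_u[\|g_\mu(x_k)\|^2] - \|\nabla f_\mu(x_k)\|^2,
\end{align*}
which follows from expanding the square and using unbiasedness to eliminate the cross term. Next I would invoke \eqref{eq:eq6} to bound $E_u[\|g_\mu(x_k)\|^2]$ from above by $4(n+4)\|\nabla f_\mu(x_k)\|^2 + 3\mu^2 L_1^2(f)(n+4)^3$. Combining the two and collecting the $\|\nabla f_\mu(x_k)\|^2$ terms yields
\begin{align*}
E_u[\|g_\mu(x_k) - \nabla f_\mu(x_k)\|^2] \leq (4n+15)\|\nabla f_\mu(x_k)\|^2 + 3\mu^2 L_1^2(f)(n+4)^3,
\end{align*}
so it suffices to take $\sigma_k^2$ equal to this right-hand side.

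It remains to argue that $\sigma_k$ is finite (so that the statement $\sigma_k \geq 0$ is meaningful). In the constrained setting of interest, $x_k \in \mathcal{X}$ with $\mathcal{X}$ compact, and $f \in C^{1,1}$ implies $\nabla f$ is continuous and therefore bounded on $\mathcal{X}$; combining this with \eqref{eq:lm}, namely $\|\nabla f(x) - \nabla f_\mu(x)\| \leq \tfrac{\mu}{2} L_1(f)(n+3)^{3/2}$, gives a uniform bound on $\|\nabla f_\mu(x_k)\|$ and hence on $\sigma_k$.

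There is no real obstacle here; the lemma is essentially a bookkeeping consequence of the unbiasedness already proved in the excerpt and of the second-moment estimate \eqref{eq:eq6}. The only subtlety worth flagging is that the bound is stated per iteration (indexed by $k$), which is natural since $\sigma_k$ depends on $\|\nabla f_\mu(x_k)\|$; a uniform bound $\bar{\sigma}$ with $\sigma_k \leq \bar{\sigma}$ could be extracted if desired by using compactness of $\mathcal{X}$ as above, but the statement as given does not require this stronger form.
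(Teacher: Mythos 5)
Your proposal is correct and follows essentially the same route as the paper: both use the unbiasedness $E_u[g_\mu(x_k)]=\nabla f_\mu(x_k)$ to reduce the variance to (at most) the second moment $E_u[\|g_\mu(x_k)\|^2]$, and then invoke a second-moment bound from Nesterov--Spokoiny to exhibit a concrete $\sigma_k^2$ (the paper does this in the remark immediately following the lemma, citing their Theorem 4, while you use the bound in \eqref{eq:eq6}). The only cosmetic difference is that you retain the $-\|\nabla f_\mu(x_k)\|^2$ term from the exact variance identity, giving the marginally tighter constant $4n+15$ instead of simply discarding it as the paper does.
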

	\begin{proof}
		We know that $E_{u}[g_{\mu}(x_k)]= \nabla f_{\mu}(x_k)$, so we have $E_{u}[||g_{\mu}(x_k)- \nabla f_{\mu}(x_k)||^2]\leq E_{u}[||g_{\mu}(x_k)||^2]\leq \sigma_k^2.$
	\end{proof}
	\begin{rem}\label{remg}
		An upper bound for $E_{u}[||g_{\mu}(x_k)||^2]$ can be obtained. For example, from {\cite[Theorem 4]{nesterov_random_2017}} we know for a function $f\in C^{0,0}$ we have $E_{u}[||g_{\mu}(x)||^2]\leq L_{0}(f)^2(n+4)^2$ and for a function $f\in C^{1,1}$ we have $E_{u}[||g_{\mu}(x)||^2]\leq \frac{\mu^2}{2}L_1^2(f)(n+6)^3+2(n+4)||\nabla f(x)||^2.$ These upper bounds can be used as candidates for $\sigma_k^2$.
	\end{rem}
	\begin{theorem}
		Consider problem \eqref{eq:eq87}. Let the sequence $\{x_k\}_{k\geq0}$ be generated by RSc$_\mu$, when $f(x)\in C^{1,1}$ satisfies the proximal PL condition in the sense of Definition \ref{as}. Then,  for any $N\geq0$, with $h_k = \frac{1}{L_{1}(f)}$, we have
		\begin{align}\label{eq:eq126}
			\frac{1}{N+1}\sum_{k=0}^{N}&\Phi_k-f(x^\ast)\leq \frac{L_{1}(f)}{l}\frac{f(x_0)-f(x^\ast)}{N+1} \notag \\
			&+\frac{\mu d_xL_{1}(f)^2(n+3)^{3/2}}{2l}+\frac{L_{1}(f)d_x}{l(N+1)}\sum_{k=0}^{N}\sigma_k \notag\\
			&+\frac{1}{l(N+1)}\sum_{k=0}^{N}\sigma_k^2,
		\end{align}
		where $\Phi_k \overset{\mathrm{def}}{=} E_{\mathcal{U}_{k}}[f(x_k)]$ for all $k\geq 1$, $\Phi_0 = f(x_0)$, $\sigma_k$ is given in Lemma \ref{lm20}, $\mathcal{U}_{k}=\{u_{0},u_{1},\dots,u_{k-1}\}$, $n$ is the dimension of $x$, and $d_x$ is the diameter of the feasible set.
	\end{theorem}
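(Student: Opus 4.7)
The plan is to mirror the unconstrained analysis of Theorem~\ref{thm:unconstrained} with three substitutions suited to the constrained setting: the gradient step is replaced by its projection; the PL lower bound on $\|\nabla f_\mu\|^2$ is replaced by Lemma~\ref{lmPPL}; and the noise in $g_\mu$ is tracked through Lemma~\ref{lm20} rather than via purely deterministic smoothing-bias bounds.

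Starting from the descent inequality \eqref{eq:eq2} applied to $f$ at $x_k$ and $x_{k+1}$ (both in $\mathcal{X}$ by construction), I would split
\[
\langle \nabla f(x_k), x_{k+1}-x_k\rangle = \langle g_\mu(x_k), x_{k+1}-x_k\rangle + \langle \nabla f(x_k) - g_\mu(x_k), x_{k+1}-x_k\rangle.
\]
With $h_k = 1/L_1(f)$, the iterate $x_{k+1} = \mathrm{Proj}_{\mathcal{X}}(x_k - h_k g_\mu(x_k))$ is precisely the minimiser defining $T(x_k, L_1(f))$ in \eqref{eq:T_def}, so the first inner product together with $\tfrac{L_1(f)}{2}\|x_{k+1}-x_k\|^2$ equals $-T(x_k, L_1(f))/(2L_1(f))$. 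Invoking Lemma~\ref{lmPPL} then converts this $T$-piece into a contraction $-\tfrac{l}{L_1(f)}(f(x_k)-f(x^\ast))$ plus a $\mu$-bias term and a $d_x\|\xi_k\|$ residual.

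The leftover cross term $\langle \nabla f(x_k)-g_\mu(x_k), x_{k+1}-x_k\rangle$ is handled by further splitting $\nabla f(x_k) - g_\mu(x_k) = (\nabla f(x_k) - \nabla f_\mu(x_k)) - \xi_k$. The deterministic smoothing bias is controlled by Cauchy--Schwarz using \eqref{eq:lm} and $\|x_{k+1}-x_k\| \le d_x$; the stochastic part involving $\xi_k$ is treated by a Young-type inequality so that the resulting $\|x_{k+1}-x_k\|^2$ factor can be absorbed via the non-expansiveness bound $\|x_{k+1}-x_k\|^2 \le \|g_\mu(x_k)\|^2/L_1(f)^2$ together with Remark~\ref{remg}. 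Taking expectation in $u_k$ and then iteratively over $\mathcal{U}_k$, and using Jensen's inequality to obtain $E[\|\xi_k\|] \le \sigma_k$ from Lemma~\ref{lm20}, produces a one-step recursion of the form
\[
\Phi_{k+1} \le \Phi_k - \tfrac{l}{L_1(f)}(\Phi_k - f(x^\ast)) + A_\mu + B\sigma_k + C\sigma_k^2,
\]
with $A_\mu$, $B$, $C$ matching the coefficients in \eqref{eq:eq126}.

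To conclude, I rearrange so that $\tfrac{l}{L_1(f)}(\Phi_k - f(x^\ast))$ is isolated, sum from $k=0$ to $k=N$, telescope the $\Phi_k - \Phi_{k+1}$ differences using $\Phi_0 = f(x_0)$ and $\Phi_{N+1} \ge f(x^\ast)$, and divide by $(N+1)\,l/L_1(f)$ to recover \eqref{eq:eq126}. The main obstacle is the bookkeeping of the residual inner product: the Young's-inequality trade-off has to be tuned so that the non-expansiveness bound $\|x_{k+1}-x_k\|^2 \le \|g_\mu(x_k)\|^2/L_1(f)^2$ feeds exactly into the clean $\sigma_k^2/(l(N+1))$ contribution of \eqref{eq:eq126} without leaving behind spurious $L_1(f)d_x^2$ remainders, while still producing the Cauchy--Schwarz-type $L_1(f)d_x\sigma_k/l$ and $\mu d_x L_1(f)^2(n+3)^{3/2}/(2l)$ terms that Lemma~\ref{lmPPL} and \eqref{eq:lm} demand.
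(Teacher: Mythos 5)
Your proposal follows the same architecture as the paper's proof --- the key identification of $x_{k+1}=\mathrm{Proj}_{\mathcal{X}}(x_k-h_kg_\mu(x_k))$ as the minimiser defining $T(x_k,L_1(f))$ in \eqref{eq:T_def}, the use of Lemma~\ref{lmPPL} to lower-bound $T$ by the suboptimality gap plus the $\mu$- and $\|\xi_k\|$-residuals, and the telescoping of the expected function values --- but it deviates in two places. First, you apply the descent inequality \eqref{eq:eq2} to $f$ rather than to $f_\mu$ as the paper does. This forces you to carry the extra smoothing-bias cross term $\langle\nabla f(x_k)-\nabla f_\mu(x_k),x_{k+1}-x_k\rangle$, which by \eqref{eq:lm} and $\|x_{k+1}-x_k\|\le d_x$ contributes $\tfrac{\mu}{2}L_1(f)(n+3)^{3/2}d_x$ per step; after dividing by $l/L_1(f)$ this is exactly the same size as the $\mu$-term already delivered by Lemma~\ref{lmPPL}, so your route yields $\tfrac{\mu d_xL_1(f)^2(n+3)^{3/2}}{l}$ rather than the $\tfrac{\mu d_xL_1(f)^2(n+3)^{3/2}}{2l}$ of \eqref{eq:eq126} --- a factor-of-two discrepancy. (In fairness, the paper avoids this only by telescoping $f_\mu$ and then silently replacing $f_\mu(x_0)-f_\mu(x_{N+1})$ with $f(x_0)-f(x^\ast)$; your version is the more honest bookkeeping at the cost of a weaker constant.) Second, you handle the stochastic cross term $h_k\langle\xi_k,s_k\rangle$ by Young's inequality plus the non-expansiveness bound $\|x_{k+1}-x_k\|\le h_k\|g_\mu(x_k)\|$ and Remark~\ref{remg}, whereas the paper splits $s_k=(s_k-v_k)+v_k$, invokes Lemma~\ref{lm:gh} for $\langle\xi_k,s_k-v_k\rangle\le\|\xi_k\|^2$, and kills $E_{u_k}\langle\xi_k,v_k\rangle=0$ by unbiasedness. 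Your alternative is more elementary (it dispenses with the Ghadimi-type proposition behind Lemma~\ref{lm:gh}) and, with the Young parameter set to $L_1(f)$, produces exactly the same $\sigma_k^2/L_1(f)$ increment --- it just requires $\sigma_k^2$ to bound the full second moment $E_u\|g_\mu(x_k)\|^2$ rather than only the variance, which is consistent with how Lemma~\ref{lm20} actually obtains $\sigma_k$. Your one-step contraction recursion followed by telescoping is algebraically equivalent to the paper's device of summing the $T$-bound and the expectation of \eqref{eq:eqPPL} separately and combining at the end.

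The only substantive gap to fix, then, is the $\mu$-coefficient: either switch the descent step to $f_\mu$ (and address the $f_\mu$-versus-$f$ mismatch in the telescoped endpoints), or accept the doubled constant and note that it does not change the order of the complexity estimate in Remark~\ref{remLc}.
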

	\begin{proof} From \cite[Equation (12)]{nesterov_random_2017} we know $f_\mu(x) \in C^{1,1}$. Writing \eqref{eq:eq2} for $f_{\mu}(x)$ at points $x_k$ and $x_{k+1}$, yields
		\begin{align*}
			\begin{split}
				&f_{\mu}(x_{k+1})\leq f_{\mu}(x_{k}) + \frac{L_{1}(f_{\mu})}{2}||x_{k+1}-x_k||^2\\&+\langle\nabla f_{\mu}(x_k),x_{k+1}-x_k\rangle\\&\leq f_{\mu}(x_{k}) + \frac{L_{1}(f_{\mu})}{2}||x_{k+1}-x_k||^2\\&+\langle g_{\mu}(x_k),x_{k+1}-x_k\rangle-\langle \xi_{k},x_{k+1}-x_k\rangle\\&\leq f_{\mu}(x_{k})-\frac{1}{2L_{1}(f)}T(x_k,L_{1}(f))+h_k\langle \xi_{k},s_k\rangle\\&\leq f_{\mu}(x_{k})-\frac{1}{2L_{1}(f)}T(x_k,L_{1}(f))\\&+h_k\langle \xi_{k},s_k-v_k\rangle+h_k\langle \xi_{k},v_k\rangle,
			\end{split}
		\end{align*}
		where the third inequality follows from \eqref{eq:T_def}, $x_k\in\mathcal{X}$, and the fact that 
		\begin{align*}
			T(x_k,L_{1}(f))  & = -2 L_{1}(f)  \underset{z\in\mathcal{X}}{\min}\big \{\frac{L_{1}(f)}{2}||z-x_k||^2 \\ & \qquad +\langle g_{\mu}(x_k),z-x_k\rangle \big \}\\ & = -2 L_{1}(f) \big (  \frac{L_{1}(f)}{2}||x_{k+1}-x_k||^2\\ &  \qquad +\langle g_{\mu}(x_k),x_{k+1}-x_k\rangle \big ).
		\end{align*}
		The last equality above follows from the definition of the projection operator.
		Taking expected value with respect to $u_{k}$ and considering Lemmas \ref{lm20} and \ref{lm:gh} in the appendix, and $h_k=\frac{1}{L_{1}(f)}$, leads to
		\begin{align*}
			\begin{split}
				E_{u_{k}}[f_{\mu}&(x_{k+1})]\leq f_{\mu}(x_{k})\\& - \frac{1}{2L_{1}(f)}E_{u_{k}}[T(x_k,L_{1}(f))]+\frac{\sigma_k^2}{L_{1}(f)}.
			\end{split}
		\end{align*}
		Taking the expectation with respect to $\mathcal{U}_{k-1}$, we have
		$$E_{\mathcal{U}_{k}}[T(x_k,L_{1}(f))]\leq2L_{1}(f)(\Phi_{k+1}-\Phi_k)+2\sigma_k^2.$$
		Summing over $k=0$ to $k=N$ and dividing it by $N+1$, results in
		\begin{align*}
			\frac{1}{N+1}\sum_{k=0}^{N}E_{\mathcal{U}_{k}}&[T(x_k,L_{1}(f))]\leq \frac{2}{N+1}\sum_{k=0}^{N}\sigma_k^2+\\&2L_{1}(f)\frac{f(x_0)-f(x^\ast)}{N+1}.
		\end{align*}
		Also, taking the expectation of \eqref{eq:eqPPL} with respect to $u_{k}$ and then $\mathcal{U}_{k-1}$, summing over $k=0$ to $k=N$, dividing it by $N+1$ and using Lemma~\ref{lm:sd} from the appendix, yield
		\begin{align*}
			\begin{split}
				&\frac{2l}{N+1}\sum_{k=0}^{N}\Phi_k-f(x^\ast) \leq  \mu d_xL_{1}(f)^2(n+3)^{3/2}\\&+\frac{1}{N+1}\sum_{k=0}^{N}E_{\mathcal{U}_{k}}[T(x_k,L_{1}(f))]+\frac{2L_{1}(f)d_x}{N+1}\sum_{k=0}^{N}\sigma_k.
			\end{split}
		\end{align*}
		Thus, we have
		\begin{align*}
			\begin{split}
				\frac{1}{N+1}\sum_{k=0}^{N}&\Phi_k-f(x^\ast)\leq \frac{L_{1}(f)}{l}\frac{f(x_0)-f(x^\ast)}{N+1}\\&+\frac{\mu d_xL_{1}(f)^2(n+3)^{3/2}}{2l}+\frac{L_{1}(f)d_x}{l(N+1)}\sum_{k=0}^{N}\sigma_k\\&+\frac{1}{l(N+1)}\sum_{k=0}^{N}\sigma_k^2.
			\end{split}
		\end{align*}
	\end{proof}
	In a practical implementation of the algorithm we might be interested in keeping track of the best guess for the optimum solution at any given step $N$. As in the unconstrained case, let this best guess be denoted by $\hat{x}_N = \arg\underset{x}{\min}[f(x):x\in\{x_0,\dots,x_N\}]$. Thus,
	\begin{equation*}
		E_{\mathcal{U}_{N-1}}(f(\hat{x}_N)-f^\ast )\leq 	\frac{1}{N+1}\sum_{k=0}^{N}(\Phi_k-f^\ast).
	\end{equation*}
	\begin{rem}[$\mathrm{RSc}_{\mu}$ Complexity, Parameter Selection, and Solution Error Bound]\label{remLc}
		If $\mu\leq\frac{l\epsilon}{d_xL_{1}(f)^2(n+3)^{3/2}}$, $N$ is in the order of $\mathcal{O}\left (\frac{L_{1}(f)}{l\epsilon} \right )$, and denoting $\sigma = \underset{k}{\max}[\sigma_k:k\in\{0,\dots,N\}]$, then
		\begin{align}	\label{eq:upperbound_constrained}
			E_{\mathcal{U}_{N-1}}(f(\hat{x}_N)-f^\ast )\leq\epsilon + \frac{L_{1}(f)d_x\sigma}{l}+\frac{\sigma^2}{l},
		\end{align}
		for some positive scalar $\epsilon$. 
		Thus, we can guarantee that there exists an integer $\bar{N}$ such that for all $N\geq \bar{N}$, $E_{\mathcal{U}_{N-1}} f(x_k)$ is in a neighbourhood of $f^\ast$ for an appropriate choice of $\mu$.
	\end{rem}
	Similar phenomena are observed consistently in the literature on constrained non-convex problems, where this effect has been reported for different algorithms, see e.g. \cite{ghadimi2016mini}, \cite{liu2018zeroth}. Note that similarly to the unconstrained case $l$ appears in the denominator of the iteration number order term. Additionally, the two terms on the right-hand side of \eqref{eq:upperbound_constrained} are inversely proportional with $l$. 
	Also, it can be seen in this case due to the choice of $h_k$, the number of iterations is not dependent on the dimension of $x$, but we know that $h_k \in (0,\frac{1}{L_1(f)}]$ and in fact $N$ is in the order of $\mathcal{O}(\frac{1}{lh_k\epsilon})$ for the more general case.

	\section{Numerical Examples}\label{sec:nuemrical}
	In this section, we consider two scenarios. In both scenarios we study the performance of Algorithms \ref{alg:RM} and \ref{alg:RMc} applied to unconstrained and constrained least square problems. Specifically, the objective function is assumed to be $||Ax-b||^2$, where $A\in R^{m \times n}$ ($n\geq m$), $x\in R^n$, and $b\in R^m$. The function is not strongly convex but it satisfies the PL condition with $l = 2||A^T||^2$ and is in $C^{1,1}$ with $L_1(f)=2||A^TA||$.
	
	\paragraph*{Scenario 1} In the first scenario, we consider an unconstrained least-squares problem of the form introduced above with $m = 100$ and $n = 1000$.  In light of Remark~\ref{remCU}, we choose $\epsilon=0.01$ and consequently we set $\mu=10^{-7}$ and $N=200000$. Matrix $A$ rows are sampled from $\mathcal{N}(0,I_m)$  and $b = A\bar{x} + \omega$, where $\bar{x}$ sampled from $\mathcal{N}(0, 1)$ and $\omega$ from $\mathcal{N}(0, 0.01)$. Moreover, the initial condition vector is sampled from $\mathcal{N}(0,1)$.  We explore the performance of the algorithm for two step sizes $\frac{1}{4(n+4)L_1(f)}\approx10^{-7}$ and $10^{-6}$. We repeat the example for 25 times. 
	The empirical mean of best guess for the optimum solution ($f(\hat{x}_N)$) over 25 runs and upper bound calculated in \eqref{eq:eq17} is presented in Fig.~\ref{fig:upper_bound}. As it was shown in Theorem~\ref{thm:unconstrained}, by increasing the step size the convergence to a neighbourhood of the solution would be faster, but this comes at the expense of larger error bounds. 
	
	\paragraph*{Scenario 2}  In this scenario we consider a constrained least squares problem with the same problem parameter choices as the previous scenario except for $\mu$. From Remark~\ref{remLc}, given the same value for $\epsilon$ as the previous case, we choose $\mu=10^{-10}$ and $N=200000$. The constraint set $\mathcal{X}$ is assumed to be $\mathcal{X}=\{x\in\mathbb{R}^n| x_i\in[-0.5,0.5], \forall i\in\{1,\dots,n\} \}$. We explore the performance of the algorithm for two step sizes $\frac{1}{nL_1(f)}\approx10^{-7}$ and $10^{-6}$ (both are less than $\frac{1}{L_1(f)}$). 
	The empirical mean of objective function value in iterations generated by Algorithm~\ref{alg:RMc} over $25$ runs are depicted in Fig.~\ref{fig:obj_func2}. The effect of additional error terms in the constrained case can be seen by comparing the figures at point $10^4$ iterations for $h_k = 10^{-6}$ cases and at point $10^5$ iterations for $h_k = 10^{-7}$cases. 
	
	\begin{figure}
		\centering
		\includegraphics[width=0.33\textwidth]{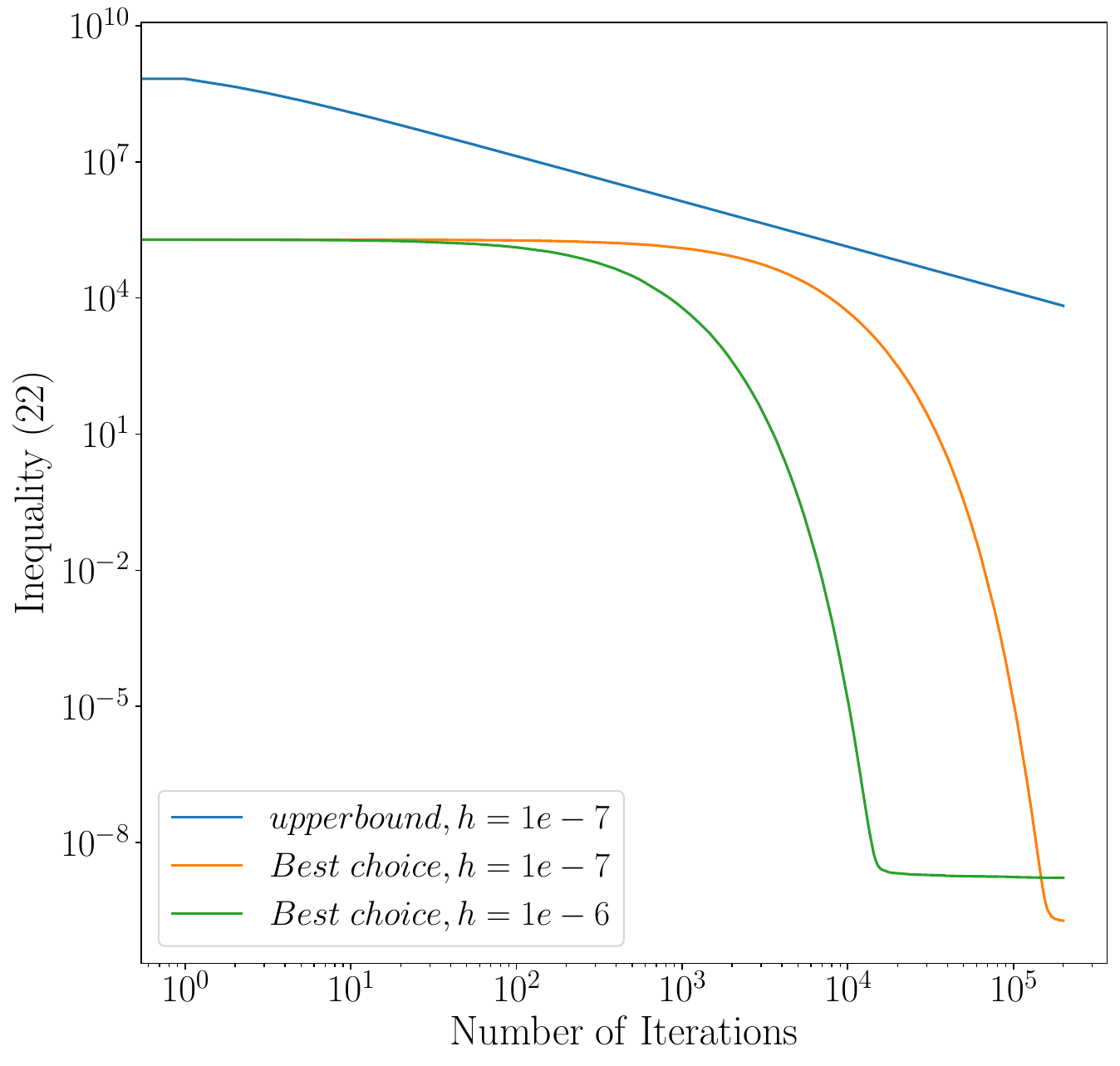}
		\caption{The evolution of the empirical mean of $f(\hat{x}_k)$ and the calculated upper bound versus the number of iterations in Scenario 1.}\label{fig:upper_bound}
	\end{figure}
	\begin{figure}
		\centering
		\includegraphics[width=0.33\textwidth]{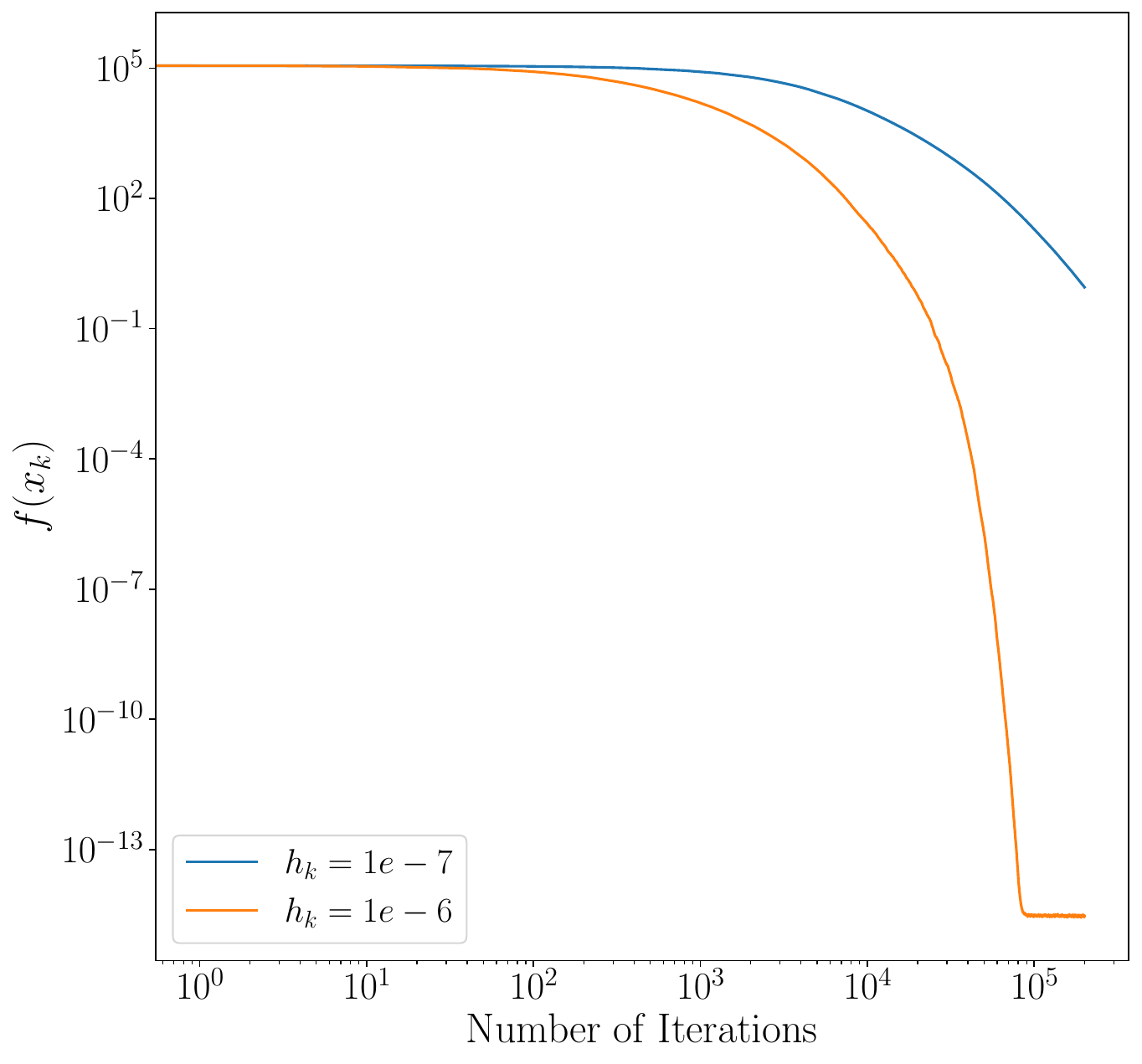}
		\caption{The evolution of $f(x_k)$ versus the number of iterations. Note that in this case $f(x^\ast)=0$ in Scenario 2.  }\label{fig:obj_func2}
	\end{figure}

	\section{Conclusion and Future Research Directions}\label{sec:conc}
	The application of a zeroth-order scheme using  random oracles for minimising PL functions with or without constraints was discussed. For the unconstrained problem, the convergence properties of the proposed algorithm were studied. Additionally, the complexity bounds for the algorithm along with guidelines for selecting algorithm parameters were introduced. Next, a generalisation of the PL inequality was exploited to establish the convergence of the algorithm for solving constrained problems. Similar to the unconstrained case, complexity bounds were derived. Numerical examples were presented to illustrate the theoretical results. An immediate future step is extending the analysis of the constrained case to the case where the constraint set is unbounded. Another possible future direction is applying similar techniques for solving minimax problems using zeroth-order oracles of the type studied in this paper.
	
	\appendix
	
	\subsection{Auxiliary Lemmas}

	\begin{lemma}\label{lm:gh}
		Let $\xi_{k} \overset{\mathrm{def}}{=} g_{\mu}(x_k) -\nabla f_{\mu}(x_k).$
		From \cite[Proposition 1]{ghadimi2016mini}, it implies that $\langle\xi_{k},s_k-v_k\rangle\leq||\xi_{k}||^2$.
	\end{lemma}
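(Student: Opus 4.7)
The plan is to unpack the definitions of $s_k$ and $v_k$ and then exploit the non-expansiveness of the Euclidean projection, followed by Cauchy--Schwarz. Specifically, from \eqref{eq:eq91}--\eqref{eq:eq93} we have
\begin{align*}
h_k s_k &= x_k - \mathrm{Proj}_{\mathcal{X}}(x_k - h_k g_{\mu}(x_k)), \\
h_k v_k &= x_k - \mathrm{Proj}_{\mathcal{X}}(x_k - h_k \nabla f_{\mu}(x_k)),
\end{align*}
so that
\begin{equation*}
h_k(s_k - v_k) = \mathrm{Proj}_{\mathcal{X}}(x_k - h_k \nabla f_{\mu}(x_k)) - \mathrm{Proj}_{\mathcal{X}}(x_k - h_k g_{\mu}(x_k)).
\end{equation*}

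Next, I would apply the standard non-expansiveness property of the projection onto a convex set, $\|\mathrm{Proj}_{\mathcal{X}}(a) - \mathrm{Proj}_{\mathcal{X}}(b)\| \leq \|a - b\|$, to the right-hand side. This yields $h_k\|s_k - v_k\| \leq h_k\|g_{\mu}(x_k) - \nabla f_{\mu}(x_k)\| = h_k\|\xi_k\|$, and since $h_k > 0$ we conclude $\|s_k - v_k\| \leq \|\xi_k\|$.

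Finally, Cauchy--Schwarz gives
\begin{equation*}
\langle \xi_k, s_k - v_k\rangle \leq \|\xi_k\|\,\|s_k - v_k\| \leq \|\xi_k\|^2,
\end{equation*}
which is the desired inequality. There is no real obstacle here; the only thing to be careful about is tracking the signs inside the projection arguments so that the factor of $h_k$ cancels correctly, and noting that the argument holds pointwise in $\omega$ (i.e. for any realisation of $u_k$), so the bound will remain valid after taking conditional expectations when it is invoked in the main theorem.
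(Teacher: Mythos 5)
Your proof is correct and is essentially the paper's argument written out in full: the cited \cite[Proposition 1]{ghadimi2016mini} is exactly the Lipschitz property $\|s_k-v_k\|\leq\|g_{\mu}(x_k)-\nabla f_{\mu}(x_k)\|$ of the mapping in \eqref{eq:eq91} with respect to its gradient argument, which for the indicator-function case reduces to the projection non-expansiveness you invoke, and Cauchy--Schwarz then gives the claim. Your observation that the bound holds pointwise in $u_k$ (so it survives the subsequent conditional expectations) is also the right thing to note.
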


	\begin{lemma}\label{lm:sd}
		For a function $f(x)$ with smoothed version $f_{\mu}(x)$ and random oracle $g_{\mu}(x)$, we have 
		$$E_{u_k}[||\xi_{k}||]\leq\sigma_k,\;\;\sigma_k\geq0,$$
		where $\xi_{k} = g_{\mu}(x_k) - \nabla f_{\mu}(x_k)$.
	\end{lemma}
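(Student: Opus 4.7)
The plan is to derive this bound directly from the second-moment bound already established in Lemma \ref{lm20}. Concretely, Lemma \ref{lm20} asserts $E_{u}[\|g_{\mu}(x_k) - \nabla f_{\mu}(x_k)\|^2] \leq \sigma_k^2$, i.e., $E_{u_k}[\|\xi_k\|^2] \leq \sigma_k^2$. The claim to be proved is a bound on the \emph{first} moment of $\|\xi_k\|$, which is a weaker quantity and can be extracted from the second moment by a standard convexity argument.

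First, I would invoke Jensen's inequality applied to the concave function $\varphi(t) = \sqrt{t}$ on $[0,\infty)$, which yields $E_{u_k}[\sqrt{\|\xi_k\|^2}] \leq \sqrt{E_{u_k}[\|\xi_k\|^2]}$. Since $\|\xi_k\| = \sqrt{\|\xi_k\|^2}$, this gives $E_{u_k}[\|\xi_k\|] \leq \sqrt{E_{u_k}[\|\xi_k\|^2]}$. Substituting the bound from Lemma \ref{lm20} then delivers $E_{u_k}[\|\xi_k\|] \leq \sqrt{\sigma_k^2} = \sigma_k$, which is exactly the desired inequality. (Equivalently, one can arrive at the same conclusion in one line via Cauchy--Schwarz applied to the pair $(1, \|\xi_k\|)$.) Nonnegativity of $\sigma_k$ is inherited from the definition in Lemma \ref{lm20}.

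There is no real obstacle here: the lemma is essentially a restatement of the variance bound in the $L^1$ rather than $L^2$ sense, and the $L^1 \leq L^2$ passage is immediate from concavity of the square root. The only mild care required is to ensure the constant $\sigma_k$ used in the statement of Lemma \ref{lm:sd} is taken to be the same constant appearing in Lemma \ref{lm20}; under this convention, the proof is a single application of Jensen's inequality and does not require any new analysis of $g_\mu$ or $f_\mu$.
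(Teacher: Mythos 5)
Your proof is correct and matches the paper's own argument: both pass from the second-moment bound of Lemma \ref{lm20} to the first moment via Jensen's inequality (the paper writes it as $E_{u_k}[\|\xi_k\|]^2 \leq E_{u_k}[\|\xi_k\|^2]$, which is the same step). No differences worth noting.
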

	\begin{proof}
		Due to Lemma \ref{lm20}, $E_{u_k}[||\xi_{k}||^2]\leq\sigma_k^2.$
		From Jensen's inequality, we have $$E_{u_k}[||\xi_{k}||]^2 \leq E_{u_k}[||\xi_{k}||^2].$$
		Thus, 
		$E_{u_k}[||\xi_{k}||]\leq\sigma_k.$
	\end{proof}
	
	\printbibliography

@article{nesterov_random_2017,
	title = {Random Gradient-Free Minimization of Convex Functions},
	volume = {17},
	issn = {1615-3375, 1615-3383},
	url = {http://link.springer.com/10.1007/s10208-015-9296-2},
	doi = {10.1007/s10208-015-9296-2},
	pages = {527--566},
	number = {2},
	journaltitle = {Foundations of Computational Mathematics},
	shortjournal = {Found Comput Math},
	author = {Nesterov, Yurii and Spokoiny, Vladimir},
	urldate = {2023-06-05},
	date = {2017-04},
	langid = {english},
	file = {Nesterov and Spokoiny - 2017 - Random Gradient-Free Minimization of Convex Functi.pdf:/home/amirali/snap/zotero-snap/common/Zotero/storage/Z7I7BHN7/Nesterov and Spokoiny - 2017 - Random Gradient-Free Minimization of Convex Functi.pdf:application/pdf},
}

@article{polyak1964gradient,
  title={Gradient methods for solving equations and inequalities},
  author={Polyak, Boris T},
  journal={USSR Computational Mathematics and Mathematical Physics},
  volume={4},
  number={6},
  pages={17--32},
  year={1964},
  publisher={Elsevier}
}

@inproceedings{karimi2016linear,
  title={Linear convergence of gradient and proximal-gradient methods under the polyak-{\l}ojasiewicz condition},
  author={Karimi, Hamed and Nutini, Julie and Schmidt, Mark},
  booktitle={Machine Learning and Knowledge Discovery in Databases: European Conference, ECML PKDD 2016, Riva del Garda, Italy, September 19-23, 2016, Proceedings, Part I 16},
  pages={795--811},
  year={2016},
  organization={Springer}
}

@article{ghadimi2016mini,
  title={Mini-batch stochastic approximation methods for nonconvex stochastic composite optimization},
  author={Ghadimi, Saeed and Lan, Guanghui and Zhang, Hongchao},
  journal={Mathematical Programming},
  volume={155},
  number={1-2},
  pages={267--305},
  year={2016},
  publisher={Springer}
}

@article{maass2021zeroth,
  title={Zeroth-Order Optimization on Subsets of Symmetric Matrices With Application to MPC Tuning},
  author={Maass, Alejandro I and Manzie, Chris and Shames, Iman and Nakada, Hayato},
  journal={IEEE Transactions on Control Systems Technology},
  volume={30},
  number={4},
  pages={1654--1667},
  year={2021},
  publisher={IEEE}
}

@inproceedings{anagnostidis2021direct,
  title={Direct-search for a class of stochastic min-max problems},
  author={Anagnostidis, Sotirios-Konstantinos and Lucchi, Aurelien and Diouane, Youssef},
  booktitle={International Conference on Artificial Intelligence and Statistics},
  pages={3772--3780},
  year={2021},
  organization={PMLR}
}

@inproceedings{marco2016automatic,
  title={Automatic LQR tuning based on Gaussian process global optimization},
  author={Marco, Alonso and Hennig, Philipp and Bohg, Jeannette and Schaal, Stefan and Trimpe, Sebastian},
  booktitle={2016 IEEE international conference on robotics and automation (ICRA)},
  pages={270--277},
  year={2016},
  organization={IEEE}
}

@article{liu2022loss,
  title={Loss landscapes and optimization in over-parameterized non-linear systems and neural networks},
  author={Liu, Chaoyue and Zhu, Libin and Belkin, Mikhail},
  journal={Applied and Computational Harmonic Analysis},
  volume={59},
  pages={85--116},
  year={2022},
  publisher={Elsevier}
}

@article{liu2018zeroth,
  title={Zeroth-order stochastic variance reduction for nonconvex optimization},
  author={Liu, Sijia and Kailkhura, Bhavya and Chen, Pin-Yu and Ting, Paishun and Chang, Shiyu and Amini, Lisa},
  journal={Advances in Neural Information Processing Systems},
  volume={31},
  year={2018}
}

@article{bottou2018optimization,
  title={Optimization methods for large-scale machine learning},
  author={Bottou, L{\'e}on and Curtis, Frank E and Nocedal, Jorge},
  journal={SIAM review},
  volume={60},
  number={2},
  pages={223--311},
  year={2018},
  publisher={SIAM}
}
\end{document}